\def\R{\mathbb{R}}
\DeclareMathOperator\graph{graph}
\def\bdry{\partial}
\def\<{\left\langle}
\def\>{\right\rangle}
\def\ca{\tilde{\angle}}
\begin{document}

\theoremstyle{plain}
\newtheorem{thm}{Theorem}
\newtheorem{lem}[thm]{Lemma}
\newtheorem*{lm}{Lemma}
\newtheorem{corr}[thm]{Corollary}

\theoremstyle{definition}
\newtheorem{defn}[thm]{Definition}
\newtheorem{exmp}{Example}

\theoremstyle{remark}
\newtheorem{blah}[thm]{}

\title{Alexandrov curvature of convex hypersurfaces in Hilbert space}   
\author{Jonathan Dahl}         
\address{Department of Mathematics, Johns Hopkins University, Baltimore, Maryla\
nd 21218}
\email{jdahl@math.jhu.edu}

\date{}    

\begin{abstract}
  It is shown that convex hypersurfaces in Hilbert spaces have nonnegative Alexandrov curvature.
   This
extends an earlier result of Buyalo for convex hypersurfaces in Riemannian manifolds of finite
dimension.
\end{abstract}
\maketitle

\section{Introduction}

In this paper, the following result is established:

\begin{thm}\label{maintheorem} If $C$ is an open set in a Hilbert space $H$
and $\overline C$ is locally convex, then
$\bdry C$ is a nonnegatively curved Alexandrov space under the induced length metric.
\end{thm}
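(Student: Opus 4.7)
The plan is to reduce Theorem~\ref{maintheorem} to Buyalo's finite-dimensional theorem by slicing $\overline C$ with finite-dimensional affine subspaces and passing to a limit. Nonnegative Alexandrov curvature is characterized by the $(1+3)$-point comparison, a condition that involves only four points at a time, so it is in principle captured by finite-dimensional data; and the Alexandrov globalization theorem further reduces us to a local statement. Accordingly I would first show that $\partial C$ is a local length space: near any $p\in\partial C$, choose a neighborhood $U$ in which $\overline C\cap U$ is convex, and observe that in $U$ the hypersurface $\partial C$ is the graph over an affine hyperplane of a Lipschitz convex function, so nearby points on $\partial C$ are joined by rectifiable curves, and the induced length metric $d_{\partial C}$ is finite and continuous locally.

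\textbf{Finite-dimensional slicing.} Fix four points $p_1,\dots,p_4\in\partial C\cap U$ together with an almost-minimizing curve in $\partial C\cap U$ between each pair. The union of these curves is separable, hence contained in a separable closed affine subspace $E\subseteq H$ through some chosen interior point of $\overline C\cap U$. Pick an increasing sequence of finite-dimensional affine subspaces $V_1\subseteq V_2\subseteq\cdots\subseteq E$ containing the four points and the interior base point, with $\bigcup V_n$ dense in $E$. Local convexity of $\overline C$ in $H$ transfers to local convexity of $\overline C\cap V_n$ in $V_n$ by intersecting a convex local neighborhood with $V_n$; by Buyalo's theorem, the Euclidean hypersurface $\partial_{V_n}(\overline C\cap V_n)$ is therefore a nonnegatively curved Alexandrov space under its intrinsic length metric $d_n$, so in particular it satisfies the $(1+3)$-point comparison.

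\textbf{Passing to the limit.} Since any point $x$ that is interior to $\overline C$ in $H$ is automatically interior in $V_n$, one has $\partial_{V_n}(\overline C\cap V_n)\cap U\subseteq\partial C\cap U$; paths in the slice are paths in $\partial C$, giving the immediate estimate $d_{\partial C}\leq d_n$ on the slice. The reverse estimate is the crux: given any almost-minimizing curve in $\partial C$ between two points lying in $V_n$, produce a nearby curve on $\partial_{V_n}(\overline C\cap V_n)$ of at most slightly greater length. Granted this, $d_n(p_i,p_j)\to d_{\partial C}(p_i,p_j)$, and the $(1+3)$-point comparison on $\partial_{V_n}(\overline C\cap V_n)$ passes to the limit to give the comparison for $(p_1,p_2,p_3,p_4)$ on $\partial C$.

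\textbf{Main obstacle.} The hard part is exactly the metric convergence $d_n\to d_{\partial C}$ just described. The natural candidate maps are the $1$-Lipschitz metric projection of $H$ onto the closed convex set $\overline C\cap V_n$, or orthogonal projection of $H$ onto $V_n$ followed by a nearest-point correction onto $\overline C$; these give ambient length control, which must then be upgraded to control of the intrinsic length on the hypersurface. Doing this uniformly seems to require the local Lipschitz-graph description of $\partial C$ together with a quantitative estimate on how restriction to $V_n$ perturbs the defining convex graph function, and this is where essentially all of the new work beyond the finite-dimensional theorem must go.
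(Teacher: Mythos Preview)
Your high-level plan---reduce to finite dimensions, invoke the known result there, and pass to the limit via the four-point condition---is the paper's philosophy as well, and you have correctly located the one real difficulty: the reverse estimate $d_n\le d_{\partial C}+\varepsilon$. But you leave this as an explicit gap, and the tools you suggest for it (metric projection onto $\overline C\cap V_n$, or orthogonal projection onto $V_n$ followed by a correction) are hard to control: such a projection need not land on $\partial_{V_n}(\overline C\cap V_n)$ at all, and even after a further correction there is no evident bound on the resulting intrinsic length.

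The paper fills the analogous gap with a device that would also complete your argument: polygonal approximation (Lemma~\ref{polypath}). One writes $\partial C$ locally as the graph $\hat f(x)=(x,f(x))$ of a Lipschitz convex function over a hyperplane $H'$, so that almost-minimizing curves in $\partial C$ are images $\hat f\circ\sigma$ of curves $\sigma$ in $H'$; each such $\sigma$ is then replaced by a polygonal path $\tau$ in $H'$ with $|l(\hat f\circ\tau)-l(\hat f\circ\sigma)|<\varepsilon$. Now choose the finite-dimensional slice \emph{after} the polygons: let $W\subset H'$ be the span of all the polygon vertices and set $V=W\oplus\R$. Then $\hat f\circ\tau$ already lies in $\partial_V(\overline C\cap V)$, so $d_V\le l(\hat f\circ\tau)\le d_{\partial C}+\varepsilon$ with no projection needed. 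Combined with your easy inclusion $d_{\partial C}\le d_V$, the six pairwise distances agree to within $\varepsilon$ and the comparison transfers. In particular the increasing sequence $V_1\subset V_2\subset\cdots$ is unnecessary; a single $V$, tailored to $\varepsilon$, suffices.

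A secondary difference in execution: the paper does not cite Buyalo on the raw slice. It first regularizes $f$ to $C^{1,1}$ by the Lasry--Lions inf--sup convolution of \cite{LL86}, and then (inside Lemma~\ref{quadruple}) mollifies in the chosen finite-dimensional directions to obtain a smooth convex graph over $\R^n$, a Riemannian manifold of nonnegative sectional curvature; the quadruple condition then comes from Riemannian geometry rather than from Buyalo. With the polygonal trick in hand, your route through Buyalo would actually bypass both smoothing steps.
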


Questions of this sort go back to \cite{Alexandrov48}, where Alexandrov defined
Alexandrov
curvature and showed that it characterizes boundaries of locally convex bodies in $\R^3$.
This was generalized by Buyalo to the case of locally convex sets of full dimension in a Riemannian manifold in \cite{Buyalo76}.
If the ambient manifold has a positive lower bound $\kappa$ on sectional curvature, it
has also been shown in \cite{AKP08} that the convex boundary has Alexandrov
curvature $\ge\kappa$.

The proof of Theorem \ref{maintheorem} relies on approximating $\bdry C$ by smooth manifolds,
where the connection between curvature and convexity is well understood. Due to the possibly
infinite dimension of $H$, we cannot smooth by integrating over $H$ against a mollifier. As
currently known smoothing operators for infinite dimensional spaces do not preserve convexity,
we proceed by integrating over a suitably chosen finite dimensional subspace. Lemma \ref{quadruple} shows this can be done in such a way that the curvature of $\bdry C$ is controlled by the curvature of smooth, finite-dimensional approximating manifolds. A similar
approximation of infinite-dimensional curvature by finite dimensional curvature is outlined 
in \cite{Halbeisen00}.

\section{Basic definitions}

We begin by defining curvature in the sense of Alexandrov. There are several equivalent
definitions, and we will find it most convenient to work with comparison angles.

\begin{defn}For three points $x,y,z$ in a metric space $(X,d)$, the
comparison angle $\ca xyz$ is defined as
\[\ca xyz=\arccos\frac{d^2(x,y)-d^2(x,z)+d^2(y,z)}{2d(x,y)d(y,z)}.\]
\end{defn}

Recall that $(X,d)$ is called a length space if the distance between any two points equals
the infimum of the lengths of paths between them.

\begin{defn} A length space $(X,d)$ is said to have nonnegative Alexandrov curvature if
$X$ is locally complete and every $x\in X$ has a neighborhood $U_x$ which satisfies the
quadruple condition:
\[\ca bac + \ca cap +\ca pab \le 2\pi\]
for any quadruple $(a; b,c,p)$ of distinct points in $U_x$.
In this case, $X$ is called a nonnegatively curved Alexandrov space.
\end{defn}

If $X$ is a Riemannian manifold, then nonnegative Alexandrov curvature is equivalent
to nonnegative sectional curvature.

It will also be helpful to fix notation for polygonal paths.

\begin{defn}For two points $p,q$ in a vector space $V$, $\sigma_{pq}:[0,1]\to V$
denotes the constant speed linear path:
\[\sigma_{pq}(t)=(1-t)p+tq.\]
\end{defn}

\begin{defn}
A path $\tau:[0,1]\to V$ is called a polygonal path if it can be written in the form
\[\tau(t)=\sum_{i=1}^{k-1} \sigma_{p_i p_{i+1}}(kt-i) 1_{[i/k,(i+1)/k]}(t)\]
for some set of points $p_1,\dots,p_k\in V$. Here $1_A$ denotes the characteristic
function of the set $A$.
\end{defn}

\section{Approximation by smooth manifolds}

In this section, we prove two technical lemmas which allow us to approximate $C^{1,1}$
convex functions $f$ on a Hilbert space by convex functions that are smooth on a finite-dimensional linear subspace. This enables us to control the Alexandrov 
curvature of $\graph_f$, the graph of $f$ in $H\times\R$, via
the sectional curvature of the approximating smooth graphs.

\begin{lem}\label{polypath} Let $f:V\to(X,d)$ be a $\lambda$-bi-Lipschitz map from a Banach space $V$
onto a metric space $(X,d)$. For any rectifiable curve
$\sigma:[0,1]\to X$ and any $\varepsilon>0$, there exists a
polygonal path $\tau:[0,1]\to V$ such that
$f\circ\tau(0)=\sigma(0)$, $f\circ\tau(1)=\sigma(1)$, $\forall
t\in[0,1]$, $|\sigma(t)-f\circ\tau(t)|<\varepsilon$ and
$|l(\sigma)-l(f\circ\tau)| <\varepsilon$.
\end{lem}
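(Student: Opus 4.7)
The plan is to lift $\sigma$ to $V$ via $f^{-1}$ and approximate the lifted curve by a polygonal path with a sufficiently fine uniform mesh. Since $f$ is $\lambda$-bi-Lipschitz and onto, its inverse $g:=f^{-1}\colon X\to V$ is $\lambda$-Lipschitz, so $\gamma := g\circ\sigma$ is a continuous rectifiable curve in $V$ with $l(\gamma)\le\lambda\,l(\sigma)<\infty$; being continuous on the compact interval $[0,1]$, it is uniformly continuous.

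Given $\varepsilon>0$, I would choose an integer $k$ large and set the vertices $p_i = \gamma(t_i)$ at uniformly spaced times $t_i=(i-1)/(k-1)$, letting $\tau$ be the polygonal path through these vertices with uniform time parametrization. The endpoint conditions $f\circ\tau(0)=\sigma(0)$ and $f\circ\tau(1)=\sigma(1)$ are then immediate. For the pointwise bound, on each polygonal subinterval $\tau(t)$ lies on a segment $[p_i,p_{i+1}]$ whose endpoints are close to $\gamma(t)$ by uniform continuity, so $\|\tau(t)-\gamma(t)\|<\varepsilon/\lambda$ for $k$ large; applying the $\lambda$-Lipschitz bound on $f$ gives $|\sigma(t)-f\circ\tau(t)|\le\lambda\|\tau(t)-\gamma(t)\|<\varepsilon$.

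For the length condition, decompose $l(f\circ\tau) = \sum_i l(f\circ\sigma_{p_ip_{i+1}})$. The lower bound $l(f\circ\tau)\ge\sum_i d(\sigma(t_i),\sigma(t_{i+1}))$ follows from $l(f\circ\sigma_{p_ip_{i+1}})\ge d(f(p_i),f(p_{i+1}))$, and the chord sum of $\sigma$ over the partition tends to $l(\sigma)$ as $k\to\infty$ by the definition of arclength as a supremum over partitions. The matching upper bound uses $l(f\circ\sigma_{p_ip_{i+1}})\le\lambda\|p_{i+1}-p_i\|$ combined with control on $\sum\|p_{i+1}-p_i\|$ as the mesh shrinks.

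The main obstacle I anticipate is tightening this upper bound: the direct bi-Lipschitz estimate only yields $l(f\circ\tau)\le\lambda l(\gamma)\le\lambda^2 l(\sigma)$, which is insufficient for the two-sided inequality $|l(\sigma)-l(f\circ\tau)|<\varepsilon$. Closing the gap requires exploiting the refinement of the partition so that the image lengths $l(f\circ\sigma_{p_ip_{i+1}})$ of small line segments collectively approach the arclengths of $\sigma$ on the corresponding subintervals, not merely the coarser $\lambda$-Lipschitz bound on each $V$-chord. Essentially, one must argue that as the mesh shrinks, the bi-Lipschitz distortion of each short segment under $f$ averages out, so that $l(f\circ\tau)$ is asymptotically pinned to $l(\sigma)$ from both sides.
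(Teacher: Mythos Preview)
Your approach is direct and constructive, and it differs from the paper's: the paper instead sets up neighborhood systems $B^1_\varepsilon(\sigma_0)$ (curves in $X$ within $\varepsilon$ of $\sigma_0$ both uniformly and in length) and $B^2_\varepsilon(\sigma_0)$ (the analogous thing in $V$), argues that the $B^2$-system and the pulled-back $f^{-1}(B^1)$-system generate equivalent topologies on rectifiable curves in $V$, and then uses that polygonal paths are already dense in the $B^2$-topology. So the paper packages the length comparison into a topology statement, while you try to estimate directly on an inscribed polygon.

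Your proposal, however, has a genuine gap that you yourself flag but do not close: the upper bound $l(f\circ\tau)\le l(\sigma)+\varepsilon$. The mechanism you suggest---that ``the bi-Lipschitz distortion of each short segment under $f$ averages out'' as the mesh shrinks---is not available for a bare $\lambda$-bi-Lipschitz map. On a single segment you only have
\[
d\bigl(f(p_i),f(p_{i+1})\bigr)\;\le\; l\bigl(f\circ\sigma_{p_ip_{i+1}}\bigr)\;\le\;\lambda\,\|p_{i+1}-p_i\|,
\]
and the ratio of the two ends can be anything in $[1,\lambda^2]$ \emph{regardless of how short the segment is}; the contributions from different segments are independent, so summing does not improve the constant and you are stuck at $l(f\circ\tau)\le\lambda^2 l(\sigma)$. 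Turning your intuition into a proof would require some form of differentiability of $f$ along lines (a Rademacher/Kirchheim metric-differentiation statement) together with an argument that the metric derivative of $f$ along the chord $\sigma_{p_ip_{i+1}}$ agrees to first order with that of $f\circ\gamma$ on $[t_i,t_{i+1}]$. None of this comes for free in a general Banach space $V$ with a general metric target $X$, and none of it is supplied in the proposal.
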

\begin{proof}
For each rectifiable curve $\sigma_0:[0,1]\to X$ and
$\varepsilon>0$, define
\[B^1_\varepsilon(\sigma_0)
=\{\sigma:[0,1]\to X ; \forall t\in[0,1],
d(\sigma_0(t),\sigma(t))<\varepsilon,
|l(\sigma_0)-l(\sigma)|<\varepsilon\}.\] For each rectifiable
curve $\sigma_0:[0,1]\to V$ and $\varepsilon>0$, define
\[B^2_\varepsilon(\sigma_0)
=\{\sigma:[0,1]\to V ; \forall t\in[0,1],
|\sigma_0(t)-\sigma(t)|<\varepsilon,
|l(\sigma_0)-l(\sigma)|<\varepsilon\}.\]

Fix a rectifiable curve $\sigma_0:[0,1]\to V$ and $\varepsilon>0$.
For any $\sigma\in B^2_\varepsilon(\sigma_0)$, for all
$t\in[0,1]$,
\[|\sigma_0(t)-\sigma(t)|<\varepsilon\implies |f\circ\sigma_0(t)-f\circ\sigma(t)|<\lambda\varepsilon.\]
Furthermore,
\begin{equation*}
\begin{split}
|l(f\circ\sigma_0)-l(f\circ\sigma)|
&\le |l(\sigma_0)-l(\sigma)| + |l(f\circ\sigma_0)-l(\sigma_0)| + |l(f\circ\sigma)-l(\sigma)|\\
&\le \varepsilon + l(f\circ\sigma_0)+l(\sigma_0) + l(f\circ\sigma)+l(\sigma)\\
&\le \varepsilon + \lambda l(\sigma_0)+l(\sigma_0)+ \lambda l(\sigma)+l(\sigma)\\
&\le \varepsilon+(\lambda+1)l(\sigma_0)+(\lambda+1)(l(\sigma_0)+\varepsilon)\\
&\le 2(\lambda+1)(\varepsilon+l(\sigma_0)).
\end{split}
\end{equation*}
So for $\varepsilon'=2(\lambda+1)(\varepsilon+l(\sigma_0))$,
\[B^2_\varepsilon(\sigma_0)\subset f^{-1}(B^1_{\varepsilon'}(f\circ\sigma_0))\]
By a similar argument, for any rectifiable curve
$\sigma_0:[0,1]\to X$ and $\varepsilon>0$,
\[f^{-1}(B^1_\varepsilon(\sigma_0))\subset B^2_{\varepsilon'}(f^{-1}\circ\sigma_0),\]
for $\varepsilon'=2(\lambda+1)(\varepsilon+l(\sigma_0))$. Thus the
$B^2$'s and $f^{-1}(B^1)$'s determine equivalent topologies on the
space of rectifiable curves $\sigma:[0,1]\to V$. Polygonal paths
are dense under the $B^2$-topology, so they are dense under the
$f^{-1}(B^1)$-topology.
\end{proof}

\begin{lem}\label{quadruple} Let $f:\Omega\to\R$ be a $C^{1,1}$ convex function, where $\Omega$ is a domain in a
Hilbert space $H$. For any $x_0\in\Omega$, there exists $R>0$ such that $Y$, the graph of
$f$ over $B_R(x_0)$, satisfies the quadruple condition
\[\ca bac + \ca cap +\ca pab \le 2\pi\]
for any quadruple $(a; b,c,p)$ of distinct points, under the induced length metric $d$ from $H\times\R$.
\end{lem}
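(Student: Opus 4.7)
The plan is to reduce to the finite-dimensional case proved by Buyalo \cite{Buyalo76} by approximating $f$ on a carefully chosen finite-dimensional subspace. First I would choose $R>0$ small enough that $\overline{B_R(x_0)}\subset\Omega$ and $\|\nabla f\|$ is bounded by some $L$ on $B_R(x_0)$; this is automatic from the $C^{1,1}$ hypothesis. The projection $\pi:Y\to B_R(x_0)$, $(x,f(x))\mapsto x$, is then $\sqrt{1+L^2}$-bi-Lipschitz between the length metric on $Y$ and the (flat) metric on the convex set $B_R(x_0)$, so Lemma \ref{polypath} applied to $\pi^{-1}$ lets one approximate any minimizing curve in $Y$ by the $f$-graph of a polygonal path in $B_R(x_0)$, with control on both pointwise distance and length.

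Given a quadruple $(a;b,c,p)$ of distinct points in $Y$ with base points $\bar a,\bar b,\bar c,\bar p$, fix $\varepsilon>0$ and apply Lemma \ref{polypath} to each of the six pairs to obtain polygonal paths in $B_R(x_0)$ whose $f$-graphs approximate the corresponding $Y$-distances to within $\varepsilon$. Let $V\subset H$ be the finite-dimensional linear subspace spanned by $\bar a-x_0,\bar b-x_0,\bar c-x_0,\bar p-x_0$ together with $v-x_0$ for every vertex $v$ of these polygonal paths. Then $V$ is finite-dimensional, the four base points lie in $x_0+V$, and all the polygonal paths lie in $(x_0+V)\cap B_R(x_0)$.

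Next, restrict $f$ to the affine slice $(x_0+V)\cap B_R(x_0)$ and mollify in $V$: for small $\delta$, set $g_\delta(x)=\int_V f(x-y)\psi_\delta(y)\,dy$, where $\psi_\delta$ is a standard mollifier on $V$ and $dy$ denotes Lebesgue measure on $V$. Convexity is preserved by mollification, so $g_\delta$ is smooth and convex on a slightly shrunken neighborhood of $\bar a,\bar b,\bar c,\bar p$ in $x_0+V$, and $g_\delta\to f|_{x_0+V}$ in $C^1$-norm on this neighborhood as $\delta\to 0$ (using the $C^{1,1}$ regularity). The graph $Y_\delta$ of $g_\delta$ is a smooth convex hypersurface in the finite-dimensional Euclidean space $(x_0+V)\times\R$, so Buyalo's theorem applies and $Y_\delta$ is a nonnegatively curved Alexandrov space. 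In particular the quadruple condition holds for $(a_\delta;b_\delta,c_\delta,p_\delta)$, where $a_\delta=(\bar a,g_\delta(\bar a))$ and analogously for the others.

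The remaining and principal task is to show that the six $Y_\delta$-length distances converge to the corresponding $Y$-length distances as $\delta\to 0$. For the upper bound, evaluating the $g_\delta$-graph on $Y_\delta$ of the polygonal path from $\bar a$ to $\bar b$ gives a curve of length within $\varepsilon+o(1)$ of $d_Y(a,b)$, since $C^1$-convergence $g_\delta\to f$ on $V$ makes $g_\delta$-graph-lengths converge to $f$-graph-lengths. For the lower bound, any path in $Y_\delta$ from $a_\delta$ to $b_\delta$ projects to a curve in $x_0+V$ whose $f$-graph is a curve in $Y$ from $a$ to $b$; the two graph-lengths differ by at most $\|\nabla f-\nabla g_\delta\|_\infty$ times the projected length, which vanishes as $\delta\to 0$. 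Once distance convergence is established, the comparison angles in $Y_\delta$ converge to those in $Y$ by continuity of $\arccos$, and the quadruple inequality passes to the limit (first $\delta\to 0$, then $\varepsilon\to 0$). The main obstacle is precisely this distance-convergence step, which is what forces the careful choice of $V$ to contain the approximate geodesics produced by Lemma \ref{polypath}.
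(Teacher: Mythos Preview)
Your proposal is correct and follows essentially the same strategy as the paper: approximate near-geodesics by polygonal paths via Lemma \ref{polypath}, take the finite-dimensional affine subspace $V$ spanned by their vertices, mollify $f$ along $V$ to obtain a smooth convex graph $Y_\delta$ of nonnegative curvature, and show the six pairwise distances in $Y_\delta$ are close to those in $Y$. The paper differs only cosmetically, arguing by contradiction and obtaining the lower distance bound via a second application of Lemma \ref{polypath} (together with a uniform length-convergence property of $\hat f_\delta$) rather than your direct $C^1$ gradient estimate.
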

\begin{proof}
$f$ is convex, hence Lipschitz continuous for some Lipschitz constant $L\ge1$.
 Let $\hat f:\Omega\to \hat f(\Omega)\subset\graph_f$ be defined
by $\hat f(x)=
(x, f(x))$, and note that $\hat f$ is $\sqrt{1+L^2}$-bi-Lipschitz.
Choose $R>0$ such that $B_{3R}(x_0)\subset\Omega$.
Suppose that
$(a; b,c,p)$ is a quadruple of distinct points such that
\[\ca bac + \ca cap +\ca pab = 2\pi + \varepsilon_0 > 2\pi,\]
where $(a; b,c,p)=(\hat f(a'); \hat f(b'),\hat f(c'),\hat f(p'))$
and $a',b',c',p'\in B_R(x_0)$. The comparison angles vary continuously
in the intrinsic distances, so there exists $\varepsilon>0$ such
that if $(A;B,C,D)$ is a quadruple of points in some other metric
space $(X_1,d_1)$ with
\begin{align*}
|d(a,b)-d_1(A,B)|<\varepsilon,\quad
|d(a,c)-d_1(A,C)|&<\varepsilon,\quad
|d(a,p)-d_1(A,P)|<\varepsilon,\\
|d(b,c)-d_1(B,C)|<\varepsilon,\quad
|d(b.p)-d_1(B,P)|&<\varepsilon,\quad
|d(c,p)-d_1(C,P)|<\varepsilon,
\end{align*}
then
\[\ca BAC + \ca CAP +\ca PAB = 2\pi + (\varepsilon_0/2) > 2\pi.\]
By Lemma \ref{polypath}, we may approximate $d(a,b)$ by the length
of the image under $\hat f$ of a polygonal path $\tau_1$ determined by points
$a'=q_1,q_2,\dots,q_{k_1-1},b'=q_{k_1}\in B_{2R}(x_0)$ such that
\[d(a,b) + (\varepsilon/3) \ge
\sum_{i=1}^{k_1-1}l(\hat f\circ\sigma_{q_i q_{i+1}})
= l(\hat f\circ\tau_1)
 \ge d(a,b).
\]
Similarly, we may approximate $d(a,c)$ by 
the image under $\hat f$ of a polygonal path determined by points $a'=q_{k_1+1},q_{k_1+2},\dots,c'=q_{k_2}\in B_{2R}(x_0)$ such that
\[d(a,c) +(\varepsilon/3) \ge
\sum_{i=k_1+1}^{k_2-1}l(\hat f\circ\sigma_{q_i q_{i+1}})
 \ge d(a,c).\]
Continue in this manner choosing $q_{k_2+1},q_{k_2+2}\dots,q_{k_3},\dots,q_{k_6}$ to approximate
the remaining four intrinsic distances.

The $k_6+1$ points $q_1,\dots,q_{k_6},x_0$ lie in a $k_6$-dimensional subspace of
$H$, which we will identify as $\R^n$, $n=k_6$. Let $\varphi_\delta:\R^n\to\R$ be the standard $C^\infty$ mollifier supported on the $\delta$-ball, and define $f_\delta:B_{5R/2}(x_0)\to\R$ by $f_\delta=f\ast\varphi_\delta$, 
where the convolution occurs in the $\R^n$-variables and
$\delta<R/2$. Let $\hat f_\delta(x)=(x,f_\delta(x))$. 
As $f$ is assumed to be convex and $C^{1,1}$, it is easy to check the following properties:
\begin{enumerate}
    \item $f_\delta|_{B_{2LR}(x_0)\cap\R^n}$ is $C^\infty$.
    \item $f_\delta|_{B_{2LR}(x_0)\cap\R^n}$ is $L$-Lipschitz.
    \item $f_\delta\to f$ pointwise as $\delta\to0$.
    \item On $\R^n\cap \overline{B_{2LR}(x_0)}$, $\nabla_{\R^n}f_\delta\to \nabla_{\R^n}f$ uniformly as $\delta\to0$.
    \item For every rectifiable curve $\sigma:[0,1]\to B_{2R}(x_0)$, 
$l(\hat f_\delta\circ\sigma)
    \to l(\hat f\circ\sigma)$. This convergence is uniform on sets
    $\{\sigma:[0,1]\to B_{2R}(x_0) ; l(\sigma)<C\}$ with $C\in\R$.
    \item $f_\delta$ is convex.
\end{enumerate}
Let
$Y_\delta$ denote the graph of $f_\delta$ over $B_{2R}(x_0)$
with metric $d_\delta$ induced by $H\times\R$, and let
$Y_{\delta,n}$ denote the graph of $f_\delta$ over
$B_{2R}(x_0)\cap\R^n$ with metric $d_{\delta,n}$ induced by
$\R^n\times\R$. Note that $f_\delta|_{B_{2R}(x_0)\cap\R^n}$ is a
$C^\infty$ convex function over a domain in $\R^n$, so
$Y_{\delta,n}$ is a Riemannian manifold of nonnegative sectional
curvature. In particular, it satisfies the quadruple condition. We
will obtain a contradiction by showing
\begin{align*}
|d(a,b)-d_{\delta,n}(\hat f_{\delta}(a'),\hat
f_{\delta}(b'))|&<\varepsilon,\quad
|d(a,c)-d_{\delta,n}(\hat f_{\delta}(a'),\hat f_{\delta}(c'))|<\varepsilon,\\
|d(a,p)-d_{\delta,n}(\hat f_{\delta}(a'),\hat
f_{\delta}(p'))|&<\varepsilon,\quad
|d(b,c)-d_{\delta,n}(\hat f_{\delta}(b'),\hat f_{\delta}(c'))|<\varepsilon,\\
|d(b,p)-d_{\delta,n}(\hat f_{\delta}(b'),\hat
f_{\delta}(p'))|&<\varepsilon,\quad |d(c,p)-d_{\delta,n}(\hat
f_{\delta}(c'),\hat f_{\delta}(p'))|<\varepsilon.
\end{align*}

Let $C=d(a,b)+d(a,c)+\dots+d(c,p)+\varepsilon$. Choosing
$\delta_0$ small with respect to $C$, we have for all
$\delta<\delta_0$,
\[\tau\in\{\sigma:[0,1]\to B_{2R}(x_0) ; l(\sigma)<C\}
\implies |l(\hat f_\delta\circ\tau)-l(\hat f\circ\tau)|\le
\varepsilon/3.\]

Recall that $\tau_1$ is the polygonal path determined by $q_1,\dots,q_{k_1}$.
\[l(\tau_1)\le l(\hat f\circ\tau_1)
=\sum_{i=1}^{k_1-1}l(\hat f\circ\sigma_{q_i q_{i+1}}) \le
d(a,b)+(\varepsilon/3)<C,\] so $l(\hat f_\delta\circ\tau_1)\le
l(\hat f\circ\tau_1)+(\varepsilon/3)$ for $\delta<\delta_0$. $\hat
f_\delta\circ\tau_1:[0,1]\to Y_{\delta,n}$ is a path from $\hat
f_\delta(a')$ to $\hat f_\delta(b')$, so
\[d_{\delta,n}(\hat f_{\delta}(a'),\hat f_{\delta}(b'))
\le l(\hat f_\delta\circ\tau_1) \le l(\hat
f\circ\tau_1)+(\varepsilon/3) \le d(a,b) + (2\varepsilon/3).\]


Applying Lemma \ref{polypath} again, choose $\tau_2:[0,1]\to
B_{2R}(x_0)\cap\R^n$ such that
\[d_{\delta,n}(\hat f_{\delta}(a'),\hat f_{\delta}(b'))
\ge l(\hat f_\delta\circ\tau_2)-(\varepsilon/6).\] Note that
\[l(\tau_2)\le l(\hat f_\delta\circ\tau_2) \le
d_{\delta,n}(\hat f_{\delta}(a'),\hat f_{\delta}(b')) +
(\varepsilon/6) \le d(a,b)+(5\varepsilon/6) < C.\] For
$\delta<\delta_0$,
\[l(\hat f_\delta\circ\tau_2)\ge l(\hat f\circ\tau_2) - (\varepsilon/3),\]
so
\[d_{\delta,n}(\hat f_{\delta}(a'),\hat f_{\delta}(b'))
> l(\hat f\circ\tau_2)-\varepsilon
\ge d(a,b)-\varepsilon.\] The remaining inequalities follow in a
similar manner, for the same choice of $C$ and $\delta_0$. So for
$\delta<\delta_0$, the quadruple $(\hat f_{\delta}(a');\hat
f_{\delta}(b'), \hat f_{\delta}(c'),\hat f_{\delta}(p'))$ violates
the quadruple condition in the Riemannian manifold of nonnegative
sectional curvature $Y_{\delta, n}$. Therefore our original
assumption is false and $Y$ satisfies the quadruple condition.
\end{proof}

\section{Proof of Theorem \ref{maintheorem}}
\begin{proof}[Proof of Theorem \ref{maintheorem}]
We must prove the quadruple condition holds in a neighborhood of
every $x_0\in\bdry C$. Let $C'=B_{2\rho}(x_0)\cap C$, where $\rho$
is chosen small enough to make $C'$ convex. Note that the
intrinsic balls of radius $\rho$ about $x_0$ are the same for $C$
and $C'$. Choose a point $y\in C'$, and $r\in(0,\rho/2)$ such that
$B_{2r}(y)\subset C'$. Let $H'$ be the hyperplane through $x_0$
with normal vector $y-x_0$. For any $x\in H'\cap B_{2r}(x_0)$, let
$L_x$ be the line through $x$ spanned by $y-x_0$. $L_x\cap C'$ is
convex and $C'$ is open and bounded, so $L_x\cap C'$ is a bounded
interval. $x+(y-x_0)\in L_x\cap C'$, so $L_x\cap C'\neq\emptyset$.
Considering $y-x_0$ as the upward direction, let $f(x)$ denote the
$\R$-coordinate of the bottom endpoint of $L_x\cap C'$ in
$H'\times \R$. $f:H'\cap B_{2r}(x_0)\to\R$ is then a convex
function, as the epigraph is convex. Furthermore, the graph of $f$
is a neighborhood of $x_0$ in $\bdry C'$, and thus also in $\bdry
C$ since $2r<\rho$.

$f$ is convex, hence Lipschitz continuous for some Lipschitz constant $L\ge1$.
As shown in \cite{LL86}, for all small enough $\varepsilon>0$, the
inf-sup-convolution
\[g_\varepsilon(x)=\inf_{z\in H'\cap B_{2r}(x_0)} \sup_{y\in H'\cap B_{2r}(x_0)}
\left[f(y)-\frac{\|y-z\|_H^2}{2\varepsilon}+\frac{\|x-z\|_H^2}{\varepsilon}\right]\]
is a $C^{1,1}$ convex function on $H'\cap B_r(x_0)$,
$g_\varepsilon$ is $L$-Lipschitz, and $g_\varepsilon\to f$
uniformly on $H'\cap B_r(x_0)$.

By Lemma \ref{quadruple}, the graph of $g_\varepsilon$ over
$H'\cap B_R(x_0)$ satisfies the quadruple condition for
$R=r/3$. The graph of $f$ over $H'\cap B_R(x_0)$ then satisfies
the quadruple condition by continuity.
\end{proof}

\bibliographystyle{plain}
\bibliography{hilbertconvex}

\end{document}